\documentclass[11pt,leqno]{amsart}
\usepackage{amsmath, amsthm, amssymb,xspace}
\usepackage[breaklinks=true]{hyperref}%To get clickable links to papers
\usepackage{amsmath,amscd}
\theoremstyle{plain}
\newtheorem{theorem}{Theorem}[section]
\newtheorem{lemma}{Lemma}[section]

\theoremstyle{definition}

\newcommand{\bggo}{\mathcal O}

\newcommand{\mf}[1]{\displaystyle{\mathfrak{#1}}}

\newcommand{\comment}[1]{}

\DeclareMathOperator{\spec}{\ensuremath{Spec}}
\DeclareMathOperator{\Gr}{\ensuremath{gr}}

\DeclareMathOperator{\ad}{\ensuremath{ad}}
\DeclareMathOperator{\Id}{\ensuremath{Id}}

\DeclareMathOperator{\Sym}{\ensuremath{Sym}}
\DeclareMathOperator{\Ann}{\ensuremath{Ann}}

\begin{document}

\title[Quotients of infinitesimal Cherednik algebras]{On
maximal primitive quotients of infinitesimal Cherednik algebras of
$\mf{gl_n}$}

\author{Akaki Tikaradze}
\address{The University of Toledo, Department of Mathematics, Toledo, Ohio, USA}
\email{\tt tikar06@gmail.com}
\begin{abstract}
We prove analogues of some of Kostant's theorems for infinitesimal Cherednik
algebras of $\mf{gl}_n$. As a consequence, it follows that in positive
characteristic the Azumaya  and smooth loci of the center of these algebras coincide.
\end{abstract}
\maketitle

\section{Introduction}

Infinitesimal Cherednik algebras (more generally, infinitesimal Hecke
algebras) were introduced by Etingof, Gan and Ginzburg \cite{EGG}.
Here we will be concerned with infinitesimal Cherednik algebras of
$\mf{gl}_n$.
Let us recall the definition.
Let $\mf{h}=\mathbb{C}^n$ denote the standard representation of
$\mf{g}=\mf{gl}_n.$
Denote by $y_i$ the standard basis elements of $\mf{h},$ and by $x_i$ the
dual basis of $\mf{h}^*$. For the given deformation parameter
$b=b_0+b_1\tau+\cdots+b_m\tau^m \in \mathbb{C}[\tau], b_m\neq 0, m\geq 0$, one defines the
infinitesimal Cherednik algebra of $\mf{gl}_n$ with parameter $b$, to be
denoted by $H_b$, as the quotient of the semi-direct product $\mf{Ug}
\ltimes T(\mf{h} \oplus \mf{h^{*}})$ by the relations
$$[x, x']=0, \qquad [y, y']=0, \qquad [y, x] = b_0 r_0(x, y) + b_1 r_1(x,
y) + \cdots + b_mr_m(x, y),$$

\noindent where $x, x'\in \mf{h^*}, y, y'\in \mf{h}$, and $r_i(x, y)
\in \mf{Ug}$ are the symmetrizations of the following functions on
$\mf{g}$ (thought of as elements in $\Sym \mf{g}$ via the trace pairing):
$$(x, (1 - tA)^{-1}y) \det(1 - tA)^{-1} = r_0(x, y)(A) + r_1(x, y)(A)t +
r_2(x, y)(A)t^2 + \cdots$$

The algebras $H_b$ have the following PBW property. If we introduce the
filtration on $H_b$ by setting
$\deg x=\deg y=1, x\in \mf{h^*}, y\in \mf{h}, \deg g = 0, g\in \mf{g},$
then the natural map
$: \mf{U}\mf{g} \ltimes \Sym(\mf{h}\oplus\mf{h}^*)\to \Gr H_b$ is an
isomorphism.

The enveloping algebra $\mf{U}(\mf{sl}_{n+1})$ is an example of $H_b$
for $m=1$ (Example 4.7 \cite{EGG}). In fact, the algebras $H_b$ have many properties similar to
the enveloping algebras of simple Lie algebras. We will introduce a Poisson variety (for each $m:=\deg b$)
which can be thought of as an analogue of the nilpotent cone of a semi-simple Lie algebra.
Our first result shows  that it is an irreducible reduced normal variety (Theorem \ref{kostant}), 
an analogue of Kostant's classical result.
As an application,  we will describe annihilators of Verma modules of $H_b,$ and
show that in positive characteristic the Azumaya locus of $H_b$ coincides with the smooth
locus of its center.

\section{The main results}
Since $H_b\simeq H_{ab}$ for any $a\in \mathbb{C}^{*}.$ we will assume from now on
that $b$ is monic $:b_m=1.$

Besides the natural action of $G = GL_n(\mathbb{C})$ on $H_b$, we also have the 
action of $\mf{h}$ and $\mf{h^{*}}$ defined as follows. For any $v\in
\mf{h},$ the adjoint action $\ad(v)$ is locally nilpotent on $H_b$. Thus
$exp(\ad(v))$ gives an automorphism of $H_b$, and in this way $\mf{h}$
acts on $H_b$. The action of $\mf{h}^*$ on $H_b$ is defined similarly.
Combining these actions with the $G$-action, we get the actions of $G
\ltimes \mf{h}, G \ltimes \mf{h}^*$ on $H_b$.

Let $Q_1, \cdots, Q_n\in k[\mf{g}]^G$ be defined as follows:
$$ \det(t\Id-X)=\sum_{j=0}^n (-1)^j t^{n-j}Q_j(X).$$

Also let $\alpha_1,\cdots, \alpha_n$ be the corresponding elements of
$\mathbf{Z}(\mf{Ug})$ under the symmetrization identification of
$\mathbb{C}[\mf{g}]^G$ and $\mathbf{Z}(\mf{Ug})$.
It was shown in \cite{T1} that the following elements generate the center of $H_b$
$$t_i=\sum_j[\alpha_i, y_j]x_j-c_i=\sum_j y_j[x_j,\alpha_i]-c_i\in \mathbf{Z}(H_b),$$
where
$c_i\in \mathbf{Z}(\mf{Ug})$ are certain elements.  The top
symbols of $c_i$ are given as follows. Let us consider
the following element of $\mathbb{C}[\mf{g}][t, \tau]$ given by
$$c' = \frac{\det(t-A)}{(t\tau-1)\det(1 - \tau A)} $$
then the top symbol of $c_i$ considered as an element of
$\mathbb{C}[\mf{g}]$ is the coefficient of $t^{n-i} \tau^m$ in $c'$.
We have that
$\mathbf{Z}(H_b) = \mathbb{C}[t_1, \cdots, t_n].$ For a character
$\chi : \mathbb{C}[t_1,\cdots , t_n] \to \mathbb{C},$ denote by $U_{b,
\chi}$ the quotient $H_b/\ker(\chi)H_b.$

 From now on we will assume that $m \geq 1$.
 Let us introduce a  new filtration
on $H_b$ by setting
$\deg x_i=m, \deg y_i=1, \deg g=1, g\in \mf{g}$. Then, $\Gr H_b=\Sym
(\mf{g}\oplus\mf{h}\oplus \mf{h}^*)$ is a Poisson algebra (and the
Poisson bracket depends only on $m$). We will denote it by $A_m$.
Denote $B_m =\Gr H_b/(\Gr t_1,\cdots, \Gr t_n)$. Again, $B_m$ is a Poisson algebra.
 Variety $\spec B_1$ is the nilpotent cone of
$\mf{sl}_{n+1}(\mathbb{C})$. 
The main result of this paper is the following analogue of some of
 Kostant's theorems for semi-simple Lie algebras \cite{K}.

\begin{theorem}\label{kostant}
The algebra $H_b$ is a free module over its center.
$B_m$ is an integral domain, which is a normal, complete intersection ring.
Moreover, the smooth locus of $\spec B_m$ under the Poisson bracket is
symplectic.
\end{theorem}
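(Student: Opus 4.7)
The plan is to adapt Kostant's classical proof to the setting of $H_b$, with the adjoint $G$-action on $\mf g$ replaced by the action of $G\ltimes\mf{h}$ on $H_b$. The three claims are intertwined: if $B_m$ is shown to be a complete intersection of the expected dimension $n^2+n$, then integrality and normality reduce to standard commutative-algebra arguments, the symplectic claim follows from the fact that the $\sigma_i:=\Gr t_i$ are Poisson-central in $A_m$, and freeness of $H_b$ over $\mathbf{Z}(H_b)$ is a filtered lift of the graded statement. All the work is therefore concentrated in the morphism
$$\pi : \spec A_m = \mf{g}\oplus\mf{h}\oplus\mf{h}^* \rt \mathbb{A}^n,\qquad p\mapsto(\sigma_1(p),\ldots,\sigma_n(p)),$$
whose scheme-theoretic zero fibre is $\spec B_m$. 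Note $\dim A_m - n = n^2+n$, matching the dimension of the nilpotent cone of $\mf{sl}_{n+1}$ in the $m=1$ case.

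The main step, and principal obstacle, is to exhibit a Kostant-type slice. The $G\ltimes\mf{h}$-action on $H_b$ preserves each $t_i$ (they are $G$-invariant and central, hence fixed by $\exp(\ad v)$ for $v\in\mf{h}$), so it descends to a Poisson action on $\spec A_m$ commuting with $\pi$. Since $\dim(G\ltimes\mf{h}) = n^2+n$, it suffices to construct a section $\Sigma\hookrightarrow\spec A_m$ of $\pi$ such that the orbit map $(G\ltimes\mf{h})\times\Sigma\rt\spec A_m$ is dominant; equivalently, to produce a ``regular'' point with zero-dimensional stabilizer in each fibre of $\pi$. The natural candidate is a deformation of Kostant's principal-nilpotent slice for $\mf{sl}_{n+1}$ (recovered in the $m=1$ case), now embedded into $\mf g\oplus\mf h\oplus\mf h^*$ and calibrated to map isomorphically onto $\mathbb{A}^n$ under $\pi$. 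Once such $\Sigma$ is in hand, every fibre of $\pi$ has pure dimension $n^2+n$, and $A_m$ being Cohen--Macaulay forces $(\sigma_1,\ldots,\sigma_n)$ to be a regular sequence. This makes $B_m$ a complete intersection of the asserted dimension.

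The remaining assertions then follow by familiar patterns. The dense $(G\ltimes\mf{h})$-orbit in $\spec B_m$ gives irreducibility; smoothness of $\pi$ along that orbit gives generic reducedness; Cohen--Macaulayness upgrades this to reducedness (via $R_0+S_1$), so $B_m$ is an integral domain. Normality follows from Serre's $R_1+S_2$, with $S_2$ automatic from Cohen--Macaulay and $R_1$ from a codimension-$2$ estimate on the Jacobian locus of $(\sigma_1,\ldots,\sigma_n)$, controlled by the $G\ltimes\mf h$-equivariance. For the symplectic claim, the Casimir property of the $\sigma_i$ makes the Poisson bivector of $A_m$ descend to $B_m$; using that $\mathbb{C}[\sigma_1,\ldots,\sigma_n]$ is the full Poisson center of $A_m$ (the Poisson analogue of $\mathbf{Z}(H_b)=\mathbb{C}[t_1,\ldots,t_n]$, obtainable from the same slice and a rank computation at a base point on $\Sigma$), the generic rank of the bivector equals $n^2+n$, so the induced bivector on $\spec B_m$ is non-degenerate on its smooth locus. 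Finally, freeness of $H_b$ over $\mathbf{Z}(H_b)$ is the filtered-to-graded lift of the fact that the complete-intersection property makes $A_m$ a free module over $\mathbb{C}[\sigma_1,\ldots,\sigma_n]$ (graded flatness over a polynomial algebra is freeness).
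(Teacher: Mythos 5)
Your plan is a Kostant-slice program, which is genuinely different from the route the paper takes, but it has a gap at precisely the step you identify as the ``principal obstacle'': you never construct the section $\Sigma$. For $m=1$ the existence of a slice is Kostant's classical result for $\mf{sl}_{n+1}$, but for $m\geq 2$ the Poisson structure on $A_m$ is a genuine deformation and it is not at all clear (nor claimed anywhere in the literature this paper cites) that a global section of $\pi:\spec A_m\to\mathbb{A}^n$ meeting every fibre in a regular point exists. Everything downstream in your argument --- the regular-sequence/pure-dimension claim, the $R_1$ estimate, the rank computation for the Poisson bivector --- is conditioned on that slice, so without it the proof does not close. Moreover, even granting $\Sigma$, the inference ``section with dominant orbit map $\Rightarrow$ every fibre pure of dimension $n^2+n$'' is not automatic; Kostant's own argument requires additional input (e.g. flatness or a dimension count for the null-fibre) to rule out higher-dimensional components, and you would need the analogous step here.

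The paper avoids the slice entirely by following the Bernstein--Lunts strategy. It localizes at $f_x=\det(\{\alpha_i,x_j\})$ and $f_y=\det(\{\alpha_i,y_j\})$, where the localized ring $(B_m)_{f_x}$ becomes a localized polynomial ring and, by a theorem of Rais, $D(f_x)$ is the dense orbit of $G\ltimes\mf{h}$ on $\spec\Sym(\mf{g}\oplus\mf{h})$. The regular-sequence and freeness claims are extracted not from a fibre-dimension count but from an auxiliary filtration (set $\deg g=1$, $\deg x_i=\deg y_j=0$) that degenerates $\Gr t_j$ to $c_j$, reducing to the fact that $\Sym\mf{g}$ is free over $\mathbb{C}[\alpha_1,\ldots,\alpha_n]$ (Kostant for $\mf{gl}_n$) and that $\mathbb{C}[\alpha_i]$ is free over $\mathbb{C}[c_i]$ via Quillen--Suslin. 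The codimension-two estimate on $X\setminus U$ is likewise obtained by degenerating to $\mf{h}\times\mf{h}^*\times N$ and using Rais again, and the symplectic statement is proved by showing $U=D(f_x)\cup D(f_y)$ lies in a single symplectic leaf (it is swept out by the connected groups $G\ltimes\mf{h}$, $G\ltimes\mf{h}^*$) and then extending across the codimension-$\geq 2$ complement. None of this requires a global slice, which is why the paper's argument works uniformly for all $m\geq 1$ whereas yours, as written, does not. If you want to salvage your approach you would need to either construct $\Sigma$ explicitly for all $m$, or replace the slice by the localization/orbit argument as in the paper.
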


\begin{proof}
We will partially follow \cite{BL}.
Denote by $f_x$ (resp. $f_y$) the element 
$\det(\{\alpha_i, x_j\})_{i, j}\in B_m$ (resp. $\det(\{\alpha_i, y_j\})_{i j}\in B_m$).
 Then the localization $(B_m)_{f_x}$ is isomorphic to the
localized polynomial algebra $\Sym (\mf{g}\oplus \mf{h})_{f_x}$. A similar statement
holds for $(B_m)_{f_y}.$ We will use the notation 
$D(f)=\spec (B_m)_f\subset \spec B_m, f\in B_m.$
Let us set $U = D(f_x) \cup D(f_y)$. To show that $X=\spec B_m$ is an irreducible, 
reduced and normal variety, it is enough to show that it is Cohen-Macaulay, 
$U$ is connected, and 
$\dim (X\setminus U) \leq \dim X-2$ [\cite{BL},  Corollary 2.3].

We have an action of the affine group $G\ltimes \mf{h}$ on $\Sym
(\mf{g}\oplus \mf{h})$.
Then $f_x$ is a semi-invariant of this action, i.e., $(g, v)f_x =
\det(g)f_x, g\in G, v\in \mf{h}$. As explained in [\cite{R1},  Theoreme 3.8], the set
$D(f_x)\subset \spec \Sym (\mf{g}\oplus \mf{h})$ is the dense orbit under
the action of $G\ltimes \mf{h}$  on $\spec \Sym (\mf{g}\oplus \mf{h})$. In
fact, this set consists of pairs $(A, v)$ with $A\in \mf{g}, v\in
\mf{h}$, such that $v, Av,\cdots, A^{n-1}v,$ are linearly independent.
We have a similar statement about $D(f_y)$, and the action of $G\ltimes
\mf{h}^*$ on  $\Sym (\mf{g}\oplus \mf{h}^*)$.

It was shown in \cite{T1} that the algebra $\mathbb{C}[\alpha_1,\cdots, \alpha_n]$ is finite
over $\mathbb{C}[c_1,\cdots, c_n].$ In particular,
$\mathbb{C}[c_1,\cdots, c_n]$ is isomorphic to the polynomial algebra in
$n$ variables. Hence,  $\mathbb{C}[\alpha_1,\cdots, \alpha_n]$ being 
a Cohen-Macaulay algebra, it must be a finitely generated
projective module over  $\mathbb{C}[c_1,\cdots, c_n].$
 Therefore, by the Quillen-Suslin theorem, $\mathbb{C}[\alpha_1,\cdots, \alpha_n]$ is a
finitely generated free module over $\mathbb{C}[c_1,\cdots, c_n]$.

Let us introduce a filtration on $A_m,$ where 
$\deg g=1, g\in \mf{g}, \deg x_i=\deg y_j=0$. Since $\Sym \mf{g}$ is a free 
$\mathbb{C}[\alpha_1,\cdots, \alpha_n]$-module (by Kostant's theorem for $\mf{g}$ \cite{BL}), we
conclude that $\Sym \mf{g}$ is a free
$\mathbb{C}[c_1,\cdots, c_n]$-module.
This implies that $(t_1,\cdots, t_n)$ is a regular sequence (since $\Gr c_j=\Gr t_j$)
 and $\Gr A_m$
is a free module over $\mathbb{C}[\Gr t_1,\cdots, \Gr t_n].$ Therefore
$A_m$ is a free module over $\mathbb{C}[\Gr t_1,\cdots, \Gr t_n],$ where $\Gr$
refers to the first filtration on $H_b.$
In particular, $H_b$ is a free $\mathbf{Z}(H_b)$-module.
Also, we obtain that $B_m$ is a complete intersection ring. In particular,
$X=\spec B_m$ is a Cohen-Macaulay variety.

Let us put $Y=X\setminus U.$
The latter filtration on $A_m$ induces the corresponding filtration on its quotient $B_m$.
We will denote the degeneration of $X$ (resp. $Y)$ under this filtration by $X'$ (resp. $Y').$
Then $X'$ is given by equations $c_i=0, i=1,\cdots, n.$ Similarly, $Y'$
 is given by
$f_x=f_y=0, c_i=0, i=1,\cdots, n$.
Therefore,  we get that $X'=\mf{h} \times \mf{h}^* \times N$ and 
$Y'=\mf{h} \times \mf{h}^* \times N\cap (f_x = 0 = f_y),$ where $N$ denotes
the nilpotent cone of $\mf{g}.$
We need to prove that $\dim Y\leq \dim X-2.$
Since $\dim Y=\dim Y',$ it is equivalent to showing  
that $\dim Y' \leq \dim X-2=\dim X'-2=\dim N + 2 \dim \mf{h} - 2$.
Consider the projection map $p : Y' \to N$. Let $W \subset N$ denote the open
subset of regular nilpotent matrices. Then, by the before mentioned result
of \cite{R1}, we have $\dim p^{-1}(W) \leq
\dim N + 2 \dim \mf{h} - 2$, and $p^{-1}(N\setminus W)=(N\setminus W) \times \mf{h}
\times\mf{h}^*$, whose dimension is $\dim N + 2 \dim \mf{h} - 2$. This proves the desired inequality.

Denote by $U'$ the smooth locus of $X.$ We have $U\subset U'.$
It is obvious that $D(f_x) \cap D(f_y)$ is nonempty. 
It is also clear that $D(f_x) \cup D(f_y)$ is in the orbit of any element of $D(f_x)
\cap D(f_y)$ under the actions of $G \ltimes \mf{h}, G \ltimes \mf{h}^*$. Since
both $G \ltimes \mf{h}, G \ltimes \mf{h}^*$ are connected algebraic groups preserving
the Poisson structure of $X,$
it follows that $U$ lies in a single symplectic leaf of $U'.$ Therefore, $U$ is a symplectic
variety and since its complement in $U'$  has the codimension $\geq 2,$
 it
follows that $U'$ is also symplectic.
\end{proof}

We will use the following standard simple 

\begin{lemma}\label{regular}

Let $A$ be a nonnegatively filtered $\bold{k}$-algebra (where $\bold{k}$ is a field) such that $\Gr A$ is commutative. 
Suppose that $z_1,\cdots, z_n\in\bold{Z}(A)$ are central elements such that $\Gr z_1,\cdots,\Gr z_n$ is
a regular sequence in $\Gr A.$ Then $\Gr (A/(z_1,\cdots, z_n))=\Gr A/(\Gr z_1,\cdots, \Gr z_n).$

\end{lemma}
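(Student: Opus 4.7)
The plan is to prove the lemma by induction on $n$, reducing everything to the single-element case. As a preliminary observation, for any central elements $z_1,\ldots,z_n$ the inclusion $\sum_k z_k F_{i-\deg z_k}A \subseteq (z_1,\ldots,z_n)\cap F_i A$ produces a canonical graded surjection
$$\Gr A/(\Gr z_1,\ldots,\Gr z_n) \twoheadrightarrow \Gr\bigl(A/(z_1,\ldots,z_n)\bigr),$$
so the entire content of the lemma is the injectivity of this map.

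For the base case $n=1$, I would set $z=z_1$, $d=\deg z$, and consider $a\in F_i A$ whose image in $A/zA$ lies in $F_{i-1}(A/zA)$; thus $a=b+zc$ with $b\in F_{i-1}A$ and $c\in A$. The key claim is that $c$ can be chosen in $F_{i-d}A$. Suppose $c\in F_j A\setminus F_{j-1}A$ with $j>i-d$; then $zc=a-b\in F_iA\subseteq F_{j+d-1}A$, so $\Gr z\cdot \Gr_j c=0$ in $\Gr A$, contradicting the assumption that $\Gr z$ is a non-zero-divisor. Hence $c\in F_{i-d}A$ and $\Gr_i a=\Gr z\cdot \Gr_{i-d}c\in (\Gr z)$, which is precisely injectivity.

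For the induction step, I would apply the base case to $A':=A/(z_1,\ldots,z_{n-1})$ endowed with the quotient filtration. The inductive hypothesis identifies $\Gr A'\cong \Gr A/(\Gr z_1,\ldots,\Gr z_{n-1})$; the image $\bar z_n$ of $z_n$ is central, with top symbol equal to the class of $\Gr z_n$ in $\Gr A'$. The regular-sequence hypothesis is exactly the statement that this class is a non-zero-divisor, so the base case applies to $(A',\bar z_n)$ and yields the result.

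The only genuinely non-formal step is the base-case injectivity above: everything else is bookkeeping with filtrations and induction. That step is where the non-zero-divisor hypothesis is actually used, in the exact form of lifting a filtration-degree bound from $A/zA$ back to $A$; this is the mechanism by which the regular-sequence condition interacts with the filtration, and it is the only place where such interaction is required.
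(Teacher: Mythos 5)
Your proof is correct, and it takes a genuinely different route from the paper's. You reduce to the single-element case $n=1$ by an outer induction on $n$: apply the inductive hypothesis to $A':=A/(z_1,\ldots,z_{n-1})$ to identify $\Gr A'$ with $\Gr A/(\Gr z_1,\ldots,\Gr z_{n-1})$, then apply the $n=1$ case to $(A',\bar z_n)$. Your base case is a direct, elementary argument: any $c$ with $a-zc\in F_{i-1}A$ is forced into $F_{i-d}A$, because otherwise $\Gr z\cdot \Gr_j c$ would be the nonzero top component of $zc$, contradicting $zc\in F_iA$. This uses only the non-zero-divisor condition in its rawest form. The paper's proof, by contrast, handles all $n$ simultaneously: it takes $\sum x_i z_i$ with $\sum \Gr x_i\,\Gr z_i=0$, invokes the fact that all syzygies on a regular sequence are generated by the Koszul relations to produce lifts $a_i\in A$ with $\Gr a_i=\Gr x_i$ and $\sum a_iz_i=0$ (using commutativity of the central $z_i$), and then inducts on $\sum\deg(x_i)$ after replacing $x_i$ by $x_i-a_i$. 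Your approach is more elementary in that it never needs the Koszul-syzygy description of regular sequences, only the non-zero-divisor property one element at a time; the cost is the extra bookkeeping of iterating through the quotient filtrations, including the (easy but worth stating) observation that $\Gr z_n$ does not vanish in $\Gr A/(\Gr z_1,\ldots,\Gr z_{n-1})$, so that the top symbol of $\bar z_n$ really is the class of $\Gr z_n$. Both arguments are valid; yours arguably makes the role of the regular-sequence hypothesis more transparent.
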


\begin{proof}

We need to show that $\Gr(\sum_i x_iz_i)\in (\Gr z_1,\cdots, \Gr z_n)$ for all $x_i\in A.$
We may assume that $\sum \Gr x_i\Gr z_i=0.$ We proceed by the induction
on $\sum \deg(x_i).$ It follows from the regularity of the sequence $(\Gr z_1,\cdots, \Gr z_n)$
that there exist $a_1,\cdots,  a_n\in A,$ such that $\Gr a_i=\Gr x_i, 1\leq i\leq n$ and $\sum_i a_iz_i=0.$
Now replacing $x_i$ by $x_i-a_i$, we are done by the inductive assumption.

\end{proof}

As a consequence of the proof of  Theorem \ref{kostant} and Lemma \ref{regular}, 
we get that $\Gr U_{b,\chi}=B_m$ is a domain, so 
$U_{b,\chi}$ is also a domain.  

In analogy with semi-simple Lie algebras, one defines an analogue of the
category $\bggo$, and Verma modules for $H_b$ \cite{T1}. Let us recall their definition.
Denote by $n_{+}$ (resp. $n_{-}$) the Lie subalgebra of $\mf{g}$
consisting of upper (resp. lower) triangular matrices. Then we have a
triangular decomposition $H_b = H_- \otimes \mf{U}(C) \otimes H_+$, where
$H_+$ (resp. $H_-$) denotes the subalgebra of $H_b$ generated by
$n_{+}$ and $\mf{h}$ (resp. $n_{-}$ and $\mf{h}^*)$, and $C\subset \mf{g}$ is the Cartan
subalgebra of all diagonal matrices. Denote by $L_{+}$ (resp. $L_{-})$ the Lie algebra
$n_{+}\ltimes \mf{h}$ (resp. $n_{-}\ltimes \mf{h}^{*}).$ Thus, $H_{+}$ (resp. $H_{-})$
is the enveloping algebra $\mf{U}L_{+}$ (resp. $\mf{U}L_{-}).$

For a weight $\lambda \in C^*$, the
corresponding Verma module $M(\lambda)$ is defined as $H_b
\otimes_{\mf{U}(C)\otimes H_+} \mathbb{C}_\lambda$, where
$\mathbb{C}_{\lambda}$ is the 1-dimensional representation of $\mf{U}(C)
\otimes H_+$ on which $C$ acts by $\lambda$ and $L_{+}$ acts by 0.

The category $\bggo{}$ (analogue of the BGG category $\bggo{}$ of semi-simple Lie algebras)
is defined as the full subcategory of the category of finitely generated left $H_{b}$-modules
whose objects are modules on which $C$ acts semi-simply and $L_{+}$ acts locally nilpotently. 

We have the following analogue of a theorem of Duflo \cite{D}.

\begin{theorem}\label{Verma}
The annihilator of a Verma module $M(\lambda)$ is generated by
$\Ann(M(\lambda))\cap \mathbf{Z}(H_b)$.
\end{theorem}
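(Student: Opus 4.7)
The plan is to show $\Ann(M(\lambda))=\ker(\chi_\lambda)H_b$ by comparing associated graded ideals under the new filtration on $H_b$.  Since $M(\lambda)$ is cyclic on $1\otimes 1$ and $\mathbf{Z}(H_b)$ commutes with $H_b$, the center acts on $M(\lambda)$ by a character $\chi_\lambda$, giving $J:=\ker(\chi_\lambda)H_b\sub I:=\Ann(M(\lambda))$.  The central elements $t_i-\chi_\lambda(t_i)$ lie in $J$, and their top symbols $\Gr t_i$ form a regular sequence in $A_m$ (from the proof of Theorem~\ref{kostant}); Lemma~\ref{regular} therefore gives $\Gr J=(\Gr t_1,\ldots,\Gr t_n)=:I_X$, the defining ideal of $X=\spec B_m$.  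Since $I_X\sub \Gr I$ and $I_X$ is prime (as $B_m$ is a domain), it suffices to prove $V(\Gr I)=X$: then $\sqrt{\Gr I}=I_X$, whence $\Gr I=I_X=\Gr J$, and the usual induction on filtration degree (lift the symbol, subtract, recurse) concludes $I=J$.

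The PBW decomposition realizes $M(\lambda)\cong H_-$ as filtered vector spaces, so $\Gr M(\lambda)\cong \Sym(L_-)=\Sym(\mf{n}_-\oplus\mf{h}^*)$ as an $A_m$-module, on which the generators in $C\oplus L_+$ all act by zero.  Hence $\Ann_{A_m}(\Gr M(\lambda))=(C+L_+)$, and the standard inclusion $\Gr I\sub \Ann_{A_m}(\Gr M(\lambda))$ yields $V(\Gr I)\supseteq V(C+L_+)=:Z$.  Under the trace identification $\mf{g}^*\cong\mf{g}$, $Z=\{(A,0,u):A\in\mf{n}_+,\ u\in\mf{h}\}\sub \mf{g}\oplus\mf{h}^*\oplus\mf{h}$; combined with $V(\Gr I)\sub X$, this forces $Z\sub X$.

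Being two-sided, $I$ is preserved by the inner automorphisms $\exp(\ad v)$ for $v\in\mf{h}$ and $v\in\mf{h}^*$, so $V(\Gr I)$ is invariant under both $G\ltimes\mf{h}$ and $G\ltimes\mf{h}^*$.  Taking $A_0=\sum_{i=1}^{n-1}E_{i,i+1}\in\mf{n}_+$ and $u_0=e_n\in\mf{h}$, the vectors $u_0,A_0u_0,\ldots,A_0^{n-1}u_0$ are $e_n,e_{n-1},\ldots,e_1$ and hence linearly independent, so $(A_0,0,u_0)\in Z\cap D(f_x)$ by the geometric description of $D(f_x)$ recalled in the proof of Theorem~\ref{kostant}.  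Since $D(f_x)$ is a single $G\ltimes\mf{h}$-orbit, $D(f_x)\sub V(\Gr I)$; applying the analogous transitivity of $G\ltimes\mf{h}^*$ on $D(f_y)$ to any point of the nonempty intersection $D(f_x)\cap D(f_y)$ then yields $D(f_y)\sub V(\Gr I)$.  Thus the dense open $U=D(f_x)\cup D(f_y)$ lies in $V(\Gr I)$, forcing $V(\Gr I)=X$.  The main obstacle is precisely propagating $Z$ out to $U$ via the two group actions: this relies on the Rentschler-style orbit structure of $D(f_x),D(f_y)$ recalled in the proof of Theorem~\ref{kostant}, together with the nonemptiness of $Z\cap D(f_x)$.
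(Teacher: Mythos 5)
Your argument is correct and takes a genuinely different route from the paper's. The paper proves $\Ann(M(\lambda))=(\Ann(M(\lambda))\cap\mathbf{Z}(H_b))H_b$ via Gelfand--Kirillov dimension: it adapts Joseph's bound to get $GK(H_b/\Ann M(\lambda))=2\,GK(M(\lambda))$, notes that $U_{b,\chi_\lambda}$ is a domain of the same GK-dimension (a consequence of Theorem~\ref{kostant} and Lemma~\ref{regular}), and then invokes the Borho--Kraft criterion that a surjection of domains preserving GK-dimension is injective. Your approach replaces the analytic dimension count with a geometric one: you show the associated graded ideal of $\Ann M(\lambda)$ cuts out exactly $X=\spec B_m$, by combining (i) the computation $\Gr J=I_X$ from Lemma~\ref{regular}, (ii) the inclusion $\Gr I\subseteq \Ann_{A_m}(\Gr M(\lambda))=(C+L_+)$, which places the linear slice $Z=V(C+L_+)$ inside $V(\Gr I)$, and (iii) the $G\ltimes\mf{h}$- and $G\ltimes\mf{h}^*$-invariance of $V(\Gr I)$ plus Rais's transitivity on $D(f_x)$ and $D(f_y)$ to push $Z$ out to the dense open $U$. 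This is closer in spirit to the ``orbit method'' flavor used in the proof of Theorem~\ref{kostant} itself, and avoids the Joseph/Borho--Kraft machinery entirely; what it requires instead is Theorem~\ref{kostant} (irreducibility of $X$, so that $I_X$ is prime and $U$ is dense) and the observation that the seed point $(A_0,0,e_n)$ of $Z$ is cyclic for $A_0$ and hence lies in $D(f_x)$. Both arguments ultimately lean on $B_m$ being a domain, but they extract the conclusion through different channels. One small thing worth spelling out: the $G$-invariance of $\Gr I$ (as opposed to the $\mf{h}$- and $\mf{h}^*$-invariance you mention) follows from $I$ being two-sided, $\ad(\mathfrak{g})$ locally finite, and $GL_n(\mathbb{C})$ connected; you state the conclusion but give the reason only for the unipotent parts.
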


\begin{proof}

The following lemma and its proof is  directly analogous to [\cite{J}, Corollary 2.8].
We present it for the completeness sake.
In what follows $GK$(-) denotes the Gelfand-Kirillov dimension.

\begin{lemma}  GK$(H_b/Ann(M(\lambda)))=2GK(M(\lambda)).$

\end{lemma}

\begin{proof} At first, we will show that $GK(H_b/Ann L(\lambda))=2GK(L(\lambda)),$
where $L(\lambda)$ is the simple module with the highest weight $\lambda.$ 
Since $L_{-}$ acts locally nilpotently on $H_{b},$ we get an imbedding 
$H_b/Ann L(\lambda)\to D(L_{-}, L(\lambda))$ where $D(L_{-}, L(\lambda))$ is
the subalgebra of $End_{\mathbb{C}}(L(\lambda))$ consisting of elements
annihilated by some power of $\ad (L_{-}).$ According to [\cite{J}, Lemma 2.6]\\
$GK(D(L_{-}, L(\lambda)))\leq 2GK(L(\lambda)),$ thus $GK(H_b/Ann L(\lambda))\leq 2GK(L(\lambda)).$ 
Let $v_{\lambda}$
be a maximal weight vector of $L(\lambda).$ Thus $L(\lambda)=H_{-}v_{\lambda}.$
 Let us choose $\delta\in C$
so that $\ad(\delta)$ has positive (negative) eigenvalues on $L_{+} (L_{-}).$ For $a\in\mathbb{C},$ 
and $H_b$-module
$M,$ we will denote by $M_a\subset M$ the space of eigenvectors of $\delta$ 
with the eigenvalue $a.$ In particular,
$L(\lambda)_{\lambda(\delta)}=\mathbb{C}v_{\lambda},$ and $L(\lambda)=\oplus_{l\geq 0} L(\lambda)_{\lambda(\delta)-l}.$
Then for any $a\in L(\lambda)_{\mu},$
there is an element $c\in H_{b},$ such that $ca=v_{\lambda}.$ But since $\lambda$ is the maximal
weight of $L(\lambda),$ using the triangular decomposition $H=H_{-}\otimes \mf{U}(C)\otimes H_{+}$ we may
choose $c\in H_{+}.$ Thus, for any $a\in L(\lambda)_{\mu}, b\in L(\lambda)_{\mu'}$ there exist
$\alpha\in {(H_{+})}_{\lambda(\delta)-\mu}, \alpha'\in {(H_{-})}_{\mu'-\lambda(\delta)}$ such that $b=\alpha'\alpha a.$
Let us denote by $\rho$ the quotient map $H_{b}\to H_{b}/Ann L(\lambda).$ Thus, 
$\dim Hom_{\mathbb{C}}(L(\lambda)_{\mu}, L(\lambda)_{\mu'})\leq \dim \rho((H_{-})_{\lambda(\delta)-\mu}(H_{+})_{\mu'-\lambda(\delta)}).$
Let $F_l=\sum_{i\leq l}(\mf{g}\oplus\mf{h}\oplus \mf{h}^{*})^i\subset H_b, l\geq 0.$ Then it follows that there is a positive integer
$k>0$ such that $F_lv_{\lambda}\subset \sum_{i\leq kl} L(\lambda)_{(\lambda(\delta)-i)}$ and
$(H_{-})_{-l}\subset F_{kl}\cap H_{-}, (H_{+})_{l}\subset F_{kl}\cap H_{+},$ for all $l>0.$
Now it follows that $\dim\rho(F_{2kl})\geq (\dim F_{l/k}v_{\lambda})^2.$
This implies that $GK(H_b/Ann L(\lambda))\geq 2GK(L(\lambda)),$ and so $GK(H_b/Ann L(\lambda))=2GK(L(\lambda)).$

  Suppose that $L(\lambda_i), i=1,\cdots, l$ are the elements of the Jordan-Holder series of $M(\lambda)$
  ($M(\lambda)$ has a finite length [\cite{T1},  Thm 4.1]). Then, 
  $$GK(H_b/Ann M(\lambda))=Max_i\{GK(H_b/Ann L(\lambda_i))\}=$$
  $$2Max_i\{GK(L(\lambda_i))\}=2GK(M(\lambda)).$$
\end{proof}

Now since $H_b/\Ann(M(\lambda))\cap\mathbf{Z}(H_b)$ is a domain and its quotient\\ 
$H_{b}/(Ann M(\lambda))$ has the same GK-dimension as \\
$2GK(M(\lambda))=GK(H_b/\Ann(M(\lambda))\cap\mathbf{Z}(H_b)),$
we conclude using [\cite{BK}, 3.5.] that
$$\Ann(M(\lambda)) = (\Ann(M(\lambda))\cap \mathbf{Z}(H_b))H_b.$$
\end{proof}

This implies that maximal primitive quotients of $H_b$ are precisely
algebras $U_{b, {\chi}}, \chi\in \spec \bold{Z}(H_b).$ Indeed, by \cite{T1}
every primitive quotient of $H_b$ has the form $H_b/\Ann L(\lambda), \lambda\in C^{*}.$
Let $M(\lambda')$ be an irreducible Verma module which belongs to the same block
of the BGG category $\bggo{}$ as $L(\lambda).$ Thus 
$\Ann(M(\lambda'))\cap\bold{Z}(H_b)=\Ann(L(\lambda))\cap\bold{Z}(H_b).$ Therefore, using Theorem
\ref{Verma}, we conclude that  $\Ann(M(\lambda'))\subset \Ann(L(\lambda)).$
Thus $H_b/\Ann(L(\lambda))$ is a quotient of $U_{b, \chi},$ where $\chi$ is the character of $\bold{Z}(H_b)$
corresponding to $M(\lambda').$

\section{The Azumaya locus}
Let us discuss the case of a field $\mathbf{k}=\bar{\bold{k}}$ of positive
characteristic. As before, let $b\in \bold{k}[\tau], \deg b=m>1,$ be
a monic polynomial. If $p$ is large enough (with respect to $m$) then the
definition of $H_b$ over $\mathbf{k}$ makes sense.
One checks easily that $\mf{h}^p, \mf{h^{*}}^p, g^p-g^{[p]}\in
\mathbf{Z}(H_b), g\in \mf{g},$ where $g^{[p]}\in \mf{g}$ denotes the
$p$-th power of $g$ as a matrix \cite{T1}. We will denote by $\mathbf{Z}_0(H_b)$
the algebra generated by the above elements. Also, for $p>> 0$ central
elements $t_1,\cdots, t_n\in \bold{Z}(H_b)$ are defined.

We have the following result which was conjectured in \cite{T1}.

\begin{theorem}\label{azumaya}
The smooth and Azumaya loci of $\mathbf{Z}(H_b)$ coincide, and
$\mathbf{Z}(H_b)$ is generated by $t_1,\cdots, t_n$ over
$\mathbf{Z}_0(H_b).$
The PI-degree of $H_b$ is $p^{\frac{1}{2}(n^2+n)}.$
\end{theorem}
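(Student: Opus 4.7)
The plan is to combine the PBW-freeness of $H_b$ over its $p$-center with a positive-characteristic version of Theorem~\ref{kostant}, and then apply a Poisson-order principle. The three claims follow in sequence: a rank count of $H_b$ over $\mathbf{Z}_0(H_b)$, the identification $\mathbf{Z}(H_b) = \mathbf{Z}_0(H_b)[t_1,\ldots,t_n]$, and a symplectic-leaf argument for the Azumaya-versus-smooth equality.

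First, under the original PBW filtration ($\deg x_i = \deg y_j = 1$, $\deg g = 0$), one has $\Gr H_b = \mf{Ug} \otimes \Sym(\mf{h}\oplus\mf{h^{*}})$, and $\Gr \mathbf{Z}_0(H_b)$ is the polynomial subalgebra generated by $g^p$, $x_i^p$, $y_j^p$ (the $g^{[p]}$ corrections lie in strictly lower filtration degree). Standard Frobenius-freeness in each tensor factor yields that $\Gr H_b$ is free of rank $p^{n^2+2n}$ over $\Gr \mathbf{Z}_0(H_b)$; via Lemma~\ref{regular} applied to lifts of these $p$-th power generators, $H_b$ itself is a free $\mathbf{Z}_0(H_b)$-module of the same rank. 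Next, the entire proof of Theorem~\ref{kostant} goes through verbatim over $\mathbf{k}$ when $p$ is sufficiently large --- Kostant's theorem for $\mf{gl}_n$, Cohen-Macaulay-ness and normality of $\spec B_m$, regularity of $(\Gr t_1,\ldots,\Gr t_n)$, and symplecticity on the smooth locus all remain valid for $p \gg 0$. Consequently $H_b$ is free over $\mathbf{k}[t_1,\ldots,t_n]$, and $\spec \mathbf{Z}(H_b)$ inherits normality and a symplectic smooth locus (with respect to the Kaledin-style Poisson bracket from a flat $\mathbf{Z}_p$-lift of $H_b$).

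Set $T = \mathbf{Z}_0(H_b)[t_1,\ldots,t_n] \subseteq \mathbf{Z}(H_b)$. Since $H_b$ is a finite $\mathbf{Z}_0(H_b)$-module, so are $\mathbf{Z}(H_b)$ and $T$. A rank count combining both filtrations shows $T$ has rank $p^n$ over $\mathbf{Z}_0(H_b)$ and $H_b$ is free over $T$ of rank $p^{n^2+n}$. Primality of $H_b$ (from PBW) together with Posner's theorem forces its total ring of fractions to be central simple, pinning the PI-degree at $p^{(n^2+n)/2}$ and identifying the center of the generic fiber with the fraction field of $T$. Normality of $\mathbf{Z}(H_b)$ then forces $\mathbf{Z}(H_b) = T$, establishing both the second and third assertions.

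Finally, $H_b$ is a Poisson order over $\mathbf{Z}_0(H_b)$, so by a standard argument the Azumaya locus in $\spec \mathbf{Z}(H_b)$ is a union of symplectic leaves; combined with the smooth locus being symplectic, this gives Azumaya $\supseteq$ smooth. For the reverse inclusion, at an Azumaya point $\mathfrak{m}$ the localization $(H_b)_{\mathfrak{m}}$ is Morita equivalent to $\mathbf{Z}(H_b)_{\mathfrak{m}}$, so the latter has the same (finite) global dimension as $(H_b)_{\mathfrak{m}}$, which is Auslander-regular because its associated graded $\Gr H_b$ is a polynomial ring; hence $\mathbf{Z}(H_b)_{\mathfrak{m}}$ is regular. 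The main obstacle will be Step~3: verifying the integrality data that pins $T$ at rank exactly $p^n$ over $\mathbf{Z}_0(H_b)$, equivalently exhibiting for each $i$ some $p$-th power $t_i^{p^k}$ lying in $\mathbf{Z}_0(H_b)$. Once this is in hand the PI-degree computation and the Azumaya/smooth comparison are essentially formal consequences of the machinery described above.
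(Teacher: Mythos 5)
Your approach is genuinely different from the paper's, and it has a real gap --- one you yourself flag in the last paragraph. The paper does not reason about $\mathbf{Z}(H_b)$ directly via rank counts and Posner's theorem. Instead, it verifies the hypotheses of a general-purpose theorem from \cite{T2} (Theorem~2.3 there) about almost-commutative algebras in positive characteristic: one reduces the structural statements of Theorem~\ref{kostant} modulo~$p$ so that the special fiber $\bar{B}_m = \Gr H_b/(\Gr t_1,\ldots,\Gr t_n)$ is an irreducible normal variety with symplectic smooth locus, and then proves a separate lemma identifying the Poisson center of $\Gr H_b$ with the subalgebra generated by $(\Gr H_b)^p$ and $\Gr t_1,\ldots,\Gr t_n$. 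That lemma is the entire content; it is precisely the graded shadow of the equality $\mathbf{Z}(H_b) = \mathbf{Z}_0(H_b)[t_1,\ldots,t_n]$ your Step~3 needs, and [\cite{T2}, Theorem~2.3] then delivers all three conclusions (Azumaya locus $=$ smooth locus, generation of the center, PI-degree) in one shot.

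Concretely, the step you mark as ``the main obstacle'' --- pinning the rank of $T = \mathbf{Z}_0(H_b)[t_1,\ldots,t_n]$ over $\mathbf{Z}_0(H_b)$ at exactly $p^n$ --- is indeed where the argument breaks, and it cannot be closed by the two filtrations alone. The top symbol of any central element of $H_b$ is Poisson central in $\Gr H_b$, so the problem reduces to determining the Poisson center of $\Gr H_b$, which is neither obviously $(\Gr H_b)^p[\Gr t_1,\ldots,\Gr t_n]$ nor something you can bypass by exhibiting $t_i^{p^k} \in \mathbf{Z}_0(H_b)$: your proposed integrality test would itself require this same Poisson-center computation to run the induction. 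The paper's lemma does the real work, by writing a putative extra Poisson-central element as $g + h$ with $h \in I^k\setminus I^{k+1}$ for $I = (\Gr t_1,\ldots,\Gr t_n)$, using regularity of the sequence to identify $I^k/I^{k+1}$ as a free Poisson $\bar{B}_m$-module, and then invoking that the Poisson center of $\bar{B}_m$ is $(\bar{B}_m)^p$ --- which is where normality and the symplectic smooth locus of $\bar{B}_m$ enter. Your Poisson-order argument (Azumaya $\supseteq$ smooth) and your global-dimension/Morita argument (Azumaya $\subseteq$ smooth) are both standard and fine as far as they go, but they are already packaged inside the black box [\cite{T2}, Theorem~2.3]; the missing piece in your proposal is the Poisson-center lemma, without which neither the rank $p^n$ claim nor the identification $\mathbf{Z}(H_b)=T$ can be established.
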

\begin{proof}
Algebra $B_m$ can be defined over $\mathbb{Z}[\frac{1}{d!}]=R$ for large enough $d.$ 
Call this algebra $\tilde{B}_m.$ Thus, $B_m=\tilde{B}_m\otimes_R\mathbb{C}.$
Since by Theorem \ref{kostant} $\spec \tilde{B}_m\otimes_R\mathbb{C}$ is an irreducible normal Poisson
variety whose regular locus is symplectic,
it follows that for large enough $p=char \bold{k},$ a similar statement holds
for $\bar{B}_m=\tilde{B}_m\otimes_R\bold{k}.$ Since 
$\Gr H_b/(t_1-a_1,\cdots, t_n-a_n)=\bar{B}_m, a_1,\cdots, a_n\in \bold{k}$  (by \ref{regular}),
the claim now follows from  [\cite{T2}, Theorem 2.3] and the following simple lemma.

\end{proof}

\begin{lemma}
Let $S$ be an affine Poisson algebra over $\mathbf{k}$, and let $(f_1,\cdots,
f_n)$ be a regular sequence of Poisson central elements. Let
$S/(f_1,\cdots, f_n)$ be a normal domain such that its smooth locus is
symplectic. Then the Poisson center of $S$ is generated as an algebra by
$S^p, f_1, \cdots, f_n.$
\end{lemma}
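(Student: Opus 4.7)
The inclusion of $S^p[f_1,\ldots,f_n]$ in the Poisson center is immediate: in characteristic $p$, $\{a^p,b\} = p\,a^{p-1}\{a,b\} = 0$, and the $f_i$ are Poisson central by hypothesis. All of the work goes into the reverse inclusion. My first step would be to establish the special case $n=0$: if $R$ is a normal affine domain over $\mathbf{k}$ whose smooth locus $V$ is symplectic, then the Poisson center of $R$ equals $R^p$. Indeed, for any Poisson-central $g \in R$, nondegeneracy of the symplectic form on $V$ combined with $\{g,\cdot\}=0$ forces $dg|_V = 0$. On a smooth characteristic-$p$ variety, $dg = 0$ makes $g$ locally a $p$-th power, and uniqueness of $p$-th roots in a reduced characteristic-$p$ ring (via $(a-b)^p = a^p - b^p$) glues these local roots to a single regular function $h$ on $V$ with $h^p = g|_V$. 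Since $h$ satisfies $T^p - g = 0$ and $R$ is normal, $h$ extends across the codimension-$\geq 2$ singular locus to a global $h \in R$, yielding $g = h^p \in R^p$. Applied to $\bar S := S/(f_1,\ldots,f_n)$, this produces the tool used throughout the rest of the argument: the Poisson center of $\bar S$ equals $\bar S^p$.

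Given $z$ in the Poisson center of $S$, reducing modulo $I := (f_1,\ldots,f_n)$ gives $\bar z \in \bar S^p$; lifting a $p$-th root to $w \in S$ makes $z - w^p \in I$ and still Poisson central. Writing $z - w^p = \sum_i h_i f_i$ and using Poisson centrality of each $f_i$, the identity $\{z - w^p, g\} = 0$ unfolds to $\sum_i \{h_i, g\} f_i = 0$ for every $g \in S$. Regularity of the sequence $(f_1,\ldots,f_n)$ forces all syzygies to be Koszul, so $\{h_i, g\} \in I$ for every $i$ and $g$; hence each $\bar h_i$ is Poisson central in $\bar S$ and equals $\bar v_i^p$ for some $v_i \in S$. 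Lifting yields $h_i \equiv v_i^p \pmod I$, and therefore
\[
z \equiv w^p + \sum_i v_i^p f_i \pmod{I^2},
\]
with $w^p + \sum_i v_i^p f_i \in S^p[f_1,\ldots,f_n]$. Iterating this construction up the $I$-adic filtration exhibits $z$ modulo $I^k$ as an element of $S^p[f_1,\ldots,f_n]$ for every $k$. In the intended application $S = A_m$ is graded with $I$ contained in the positive-degree part, so the iteration terminates on each homogeneous component; more generally Krull's intersection theorem closes the argument.

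The main obstacle I anticipate is the characteristic-$p$ descent showing that the Poisson center of $\bar S$ coincides with $\bar S^p$. Assembling the local fact ($dg = 0$ on a smooth characteristic-$p$ variety makes $g$ locally a $p$-th power), the uniqueness of $p$-th roots, and the normality of $\bar S$ needed to extend the resulting function from the symplectic smooth locus across its codimension-two complement is precisely where the hypotheses of the lemma earn their keep. The Koszul-syzygy reduction and the $I$-adic iteration are formal by comparison.
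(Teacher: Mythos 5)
Your proposal is correct and follows essentially the same route as the paper, with one substantive addition and one organizational difference worth noting. The paper cites \cite{T2} for the inclusion of the Poisson center of $S$ in $S^p+I$, and parenthetically invokes the fact that the Poisson center of $S/I$ equals $(S/I)^p$; you instead give a self-contained proof of this $n=0$ base case, using nondegeneracy of the symplectic form on the smooth locus to force $dg|_V=0$, the characteristic-$p$ Cartier-type fact that closed $0$-forms on a smooth variety are locally $p$-th powers, uniqueness of $p$-th roots in a reduced ring to glue, and normality to extend across the codimension-$\ge 2$ singular locus. That is a worthwhile detail to spell out. Organizationally, the paper fixes the largest $k$ with $f\in S^p[f_1,\dots,f_n]+I^k$ and derives a contradiction by analyzing $\bar h\in I^k/I^{k+1}$ as a free Poisson $S/I$-module with monomial basis $f_1^{m_1}\cdots f_n^{m_n}$; you run the same computation inductively up the $I$-adic filtration. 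The key ingredients are identical in both: freeness of $I^k/I^{k+1}$ over $S/I$ with the Koszul monomial basis (from regularity of the sequence), Poisson-centrality of the $f_i$ to push brackets onto the coefficients, and the base case identification of the Poisson center of $S/I$ with $(S/I)^p$. Both proofs make the same implicit appeal to $I$-adic separation of $S^p[f_1,\dots,f_n]$ in $S$ in order to descend from ``congruent mod $I^k$ for all $k$'' (or, in the paper's phrasing, to guarantee a maximal such $k$ exists); you at least flag this point and note that it is clear in the graded application, which the paper does not.
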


\begin{proof}
Let us denote the ideal $(f_1,\cdots ,f_n)$ by $I.$
It follows immediately that the Poisson center of $S$ lies in $S^p+I$
[\cite{T2}, proof of Lemma 2.4]. Let $f \notin S^p[f_1,\cdots, f_n]$ be in the Poisson center of $S$. 
Then there is $k$ such that $f\in (S^p[f_1,\cdots ,f_n]+I^k)\setminus (S^p[f_1,\cdots, f_n]+I^{k+1}).$
Let us write $f=g+h,$  where $g\in S^p[f_1,\cdots, f_n], h\in I^k\setminus (S^p[f_1,\cdots, f_n]+I^{k+1}).$
But
$I^k/I^{k+1}$ is a free Poisson $S/I$-module.
Indeed, since $f_1,\cdots ,f_n$ 
is a regular sequence, it follows that $I^k/I^{k+1}$ is a free
$S/I$-module with the basis $f_1^{m_1}f_2^{m_2}\cdots f_n^{m_n}, \sum_{l=1}^n m_l=k.$ 
Since $f_1,\cdots, f_n$ are Poisson central elements, it follows that $I^k/I^{k+1}$ is a free
Poisson $S/I$-module with the basis consisting of monomials $f_1^{m_1}\cdots f_{n}^{m_n}.$
Thus, $I^{k}/I^{k+1}=\oplus_{m_1,\cdots, m_n} (S/I)f_1^{m_1}\cdots f_{n}^{m_n}.$
Let us denote by $\bar{h}$ the image of $h$ in $I^k/I^{k+1}.$
Let us write $\bar{h}=\sum a_{m_1,\cdots, m_n}f_1^{m_1}\cdots f_n^{m_n}, a_{m_1,\cdots, m_n}\in S/I.$
Since $\lbrace S, h \rbrace=0,$ it follows that $a_{m_1,\cdots, m_n}\in (S/I)^p$ (since the Poisson
center of $S/I$ is $(S/I)^p$). Therefore,
 the image of
$h$ in $I^k/I^{k+1}$ must lie in 
$S^p[f_1,\cdots,  f_n]/I^{k+1},$ a contradiction.
\end{proof}

\acknowledgement{ I am very grateful to the anonymous referee for many
helpful suggestions.}

\end{document}